\begin{document}
\section{Introduction}
\label{sec:introduction}

The Beta distribution, denoted $\text{Beta}(a, b)$ for parameters $a, b > 0$, is a fundamental continuous probability distribution supported on the interval $[0, 1]$. It arises naturally as a prior in Bayesian statistics \cite{castillo2017polya}, and in contexts involving proportions and random processes constrained to finite intervals \cite{frankl1990some}.

The upper tail probability of a Beta-distributed random variable $X \sim \text{Beta}(a, b)$ is defined as

$$
\PP{X \geq u} \triangleq \frac{\Gamma(a + b)}{\Gamma(a) \Gamma(b)} \int_u^1 x^{a - 1}(1 - x)^{b - 1} \, dx, \quad \text{for } 0 \leq u \leq 1,
$$
\noindent
where $\Gamma(z) \triangleq \int_0^\infty t^{z - 1} e^{-t} \, dt$ denotes the gamma function. Despite its simple definition, computing \(\PPs{X\sim \text{Beta}(a, b)}{X \geq u}\) is generally intractable~\cite{dutka1981incomplete}. Nonetheless, accurately estimating this quantity finds applications in areas such as large deviation theory~\cite{zhang2020non} and adaptive Bayesian inference~\cite{elder2016bayesian}. As a result, a variety of closed-form exponential upper bounds have been developed. We summarize some of the most notable ones in Table~\ref{tab:bounds}.

Among these bounds, Kullback–Leibler (KL)-type bounds stand out for their asymptotic sharpness. Specifically, for \(x \in (0, 1)\), the tail probability admits the following large deviation approximation (see Table~\ref{tab:bounds} for the definition of the $\text{kl}$ function):
\[
\log \PPs{X\sim \text{Beta}(nx, n(1-x))}{X \geq u} \sim_{n \to \infty} -n \kl{x}{u}, \quad \text{for } u \in (0, 1),\quad\text{(see, e.g.,~\cite{lynch1987large}).}
\]

We refer to a particularly effective refinement of the standard KL upper bound, recently proposed in~\cite{henzi2023some}, as the \emph{perturbed KL bound}. This bound improves upon both Hoeffding-type~\cite[Cor.~5]{henzi2023some} and Bernstein-type~\cite[Lem.~6]{henzi2023some} inequalities. The idea is to introduce a perturbation parameter \( \eta > 0 \) into the mean within the kl, shifting it from the standard Beta mean \( a/(a + b) \) to a smaller value \( (a - \eta)/(a + b - \eta) \). This modification yields a tighter divergence, effectively sharpening the bound by moving the mean slightly away from the threshold \( u \).

The first result of this work\footnote{This first result was also presented in \cite{perrault2025perturbed}.} establishes that the perturbed KL bound for Beta distributions from \cite{henzi2023some} can be improved by allowing the perturbation parameter to exceed the threshold \( 1 + \frac{a-1}{b+1} \).

\begin{table}[t]
\centering
\begin{tabular}{|c|c|c|}
\hline
Bound on $\log\PPs{X\sim \text{Beta}(a, b)}{X \geq u}$ & Validity range & Type
\\
  \hline
  \transparent{0}
\fbox{\transparent{1}${-2(a+b+1)\pa{u - \frac{a}{a+b}}^2}$} & \multirow{5}{*}{$\frac{a}{a+b}\leq u\leq 1$} & Hoeffding \cite{marchal2017sub} 
  \\
  \cline{1-1}\cline{3-3}
  \transparent{0}\fbox{\transparent{1}
  \(-\frac{\pa{(a+b)u - {a}}^2}{\frac{2ab}{a+b+1}+\frac{4 \pa{(a+b)u - {a}}\pa{b-a}^+}{3(a+b+2)}} \)} && Bernstein \cite{skorski2023bernstein} \\
  \cline{1-1}\cline{3-3}\transparent{0}\fbox{\transparent{1}
  $-(a+b)\kl{\frac{a}{a+b}}{u}$}& & Kullback–Leibler (KL) \cite{dumbgen1998new} \\
  \hline   \transparent{0}\fbox{\transparent{1}
  \(\begin{array}{cc}
  -(a+b-\eta)\kl{\frac{a-\eta}{a+b-\eta}}{u}, \\
  \text{for }a\geq 1, b > 0, \eta= 1+\frac{a-1}{b+1}. 
    \end{array}\)} & {$\frac{a-\eta}{a+b-\eta}\leq u\leq 1$} & {perturbed KL \cite{henzi2023some}}\\
  \hline
\end{tabular}
\caption{Table of well-known exponential upper bounds for the Beta distribution, where $\kl{p}{q}\triangleq p\log(p/q)+(1-p)\log((1-p)/(1-q))\in [0,\infty]$ for $p,q\in [0,1].$}
\label{tab:bounds}
\end{table} 

On the other hand, the Dirichlet distribution and its infinite-dimensional counterpart, the Dirichlet process (DP) ---
 originally introduced by Ferguson in the 1970s~\cite{ferguson1973bayesian} --- also frequently rely on tail probability bounds in settings such as multi-armed bandits~\cite{belomestny2023sharp}, reinforcement learning~\cite{tiapkin2022dirichlet}, and Bayesian bootstrap methods~\cite{rubin1981bayesian}. A widely used tail bound in this context is a direct extension of the standard KL bound from \cite{dumbgen1998new}, transitioning from the Beta distribution to DPs (more precisely, to Dirichlet-weighted sums) by replacing the binary KL with the multivariate KL (see \cite{perrault2024new}).

This naturally leads to the question: Can the perturbed KL-type bounds --- proven effective for Beta distributions --- be generalized to the Dirichlet setting and, more broadly, to DPs?

The second result of the present work is to provide an affirmative answer to this question.  Building on our refined Beta tail bound, we then extend it to DPs. 
Before presenting our two contributions, we introduce the necessary notation.

\subsection{Notation and preliminaries}
We consider a compact metric space \( \Omega \) equipped with its Borel \( \sigma \)-algebra \( \mathcal{B}(\Omega) \). Let \( \mathcal{M}(\Omega) \) (resp. \( \mathcal{M}_1(\Omega) \)) denote the space of finite non-negative (resp. probability) measures on \( \Omega \), and let \( \mathcal{B}(\mathcal{M}_1(\Omega)) \) denote the Borel \( \sigma \)-algebra induced by the weak topology on \( \mathcal{M}_1(\Omega) \). The set of continuous functions \( f: \Omega \to \mathbb{R} \) is denoted by \( \mathcal{C}(\Omega) \). The Kullback–Leibler (KL) divergence between two probability distributions \( \mu , \nu \in \cM_1(\Omega) \) is defined as:
\[
\KL{\mu}{\nu} \triangleq
\begin{cases}
\int \log \left( \frac{d\mu}{d\nu} \right) d\mu & \text{if } \mu \ll \nu, \\
\infty & \text{otherwise}.
\end{cases}
\]
We consider a Dirichlet Process (DP) on \( \Omega \), characterized by a scale parameter \( \alpha > 0 \) and a base distribution \( \nu_0 \in \mathcal{M}_1(\Omega) \), whose measure's law is denoted as \( \text{DP}(\alpha \nu_0) \), and whose realization \( X \) is a random probability measure on the space \( \Omega \). We assume, without loss of generality, that \( \nu_0 \) is supported on \( \Omega \).
The original definition of the DP says that for any finite measurable partition $A_1\sqcup\dots\sqcup A_k=\Omega, \pa{X(A_1),\dots,X(A_k)}\sim\text{Dir}\pa{\alpha\nu_0(A_1),\dots,\alpha\nu_0(A_k)}$, where $\text{Dir}(a_1,\dots,a_k)$ denotes the Dirichlet distribution with
parameters $a_1,\dots,a_k$.
Another important representation of the DP is related to the characterization of the Gamma distribution by \cite{lukacs1955characterization}, and is expressed as \( X = G/G(\Omega) \), where \( G \sim \mathcal{G}(\alpha \nu_0) \) is the standard Gamma process with shape parameter \( \alpha \nu_0 \), i.e.,
$G(A) \sim \text{Gamma}(\alpha\nu_0(A), 1)$ for any measurable set $A \subset \Omega$ \cite{casella2005christian}. An important structural property is the stochastic independence between \( X = G / G(\Omega) \) and the total mass \( G(\Omega) \).

For a probability distribution \( \nu \in \mathcal{M}_1(\Omega) \), one of the central information-theoretic measures we are considering is defined as an infimum of KL divergences: for some real-valued continuous function \( f \in \mathcal{C}(\Omega) \) and some \( u \in \mathbb{R} \), we define
\[
\mathcal{K}_{\inf}(\nu, u, f) \triangleq \inf_{\mu \in \mathcal{M}_1(\Omega), \, \int f d\mu \geq u} \KL{\nu}{\mu},
\]
where, by convention, the infimum of the empty set is defined as \( \infty \). When $\Omega=[d]=\sset{1,\dots,d}$ is finite, we shall also use the notation $\Kinf\pa{\pa{p_i}_{i\in [d]},u,\pa{f_i}_{i\in [d]}}$, for $f\in \R^d$ and $p\in \cM_1\pa{[d]}$.
Like the KL divergence, $\Kinf$ admits a variational formula, which we state in Lemma~\ref{lem:Variational formula for Kinf}.
\begin{lemma}[Variational formula for $\Kinf$]
    For all $\nu\in \cM_1(\Omega)$, $f\in \cC(\Omega)$, $u\in [f_{\min},f_{\max})$, where $f_{\max}\triangleq \max_{x\in \Omega} f(x)$, $f_{\min}\triangleq \min_{x\in \Omega} f(x)$, we have
    \[\Kinf(\nu, u, f) = \max_{\lambda\in [0,1/(f_{\max}-u)]} \int{\log\pa{1-\lambda \pa{f-u}}}d\nu.\]
 \label{lem:Variational formula for Kinf}
\end{lemma}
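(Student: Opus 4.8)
The plan is to establish the two inequalities separately, the common engine being the function $h\colon\lambda\mapsto\int\log(1-\lambda(f-u))\,d\nu$ on the interval $I\triangleq[0,1/(f_{\max}-u)]$, with values in $[-\infty,\infty)$. I would first record the soft facts: $h$ is concave (for each $x$, $\lambda\mapsto\log(1-\lambda(f(x)-u))$ is concave, being $\log$ composed with an affine map, and concavity survives integration); $h$ is continuous on $I$ (dominated convergence on $[0,\lambda_0]$ for each $\lambda_0<1/(f_{\max}-u)$, and, to reach the endpoint $1/(f_{\max}-u)$, monotone convergence after splitting the integral over $\{f>u\}$ and $\{f\le u\}$); hence $h$ attains its maximum over the compact set $I$ at some $\lambda^\star$.

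\emph{Lower bound $\Kinf(\nu,u,f)\ge\max_I h$ (weak duality).} Take any competitor $\mu\in\mathcal{M}_1(\Omega)$ with $\int f\,d\mu\ge u$ and $\KL{\nu}{\mu}<\infty$, so that $\nu\ll\mu$; write $g\triangleq d\nu/d\mu$. For $\lambda\in I$ such that $1-\lambda(f-u)$ is bounded away from zero, apply $\log t\le t-1$ to $t=(1-\lambda(f-u))/g$ on $\{g>0\}$ and integrate against $\mu$; a small amount of bookkeeping on $\{g=0\}$ (where $g\log g$ is read as $0$ and $1-\lambda(f-u)\ge 0$) gives
\[
\int\log(1-\lambda(f-u))\,d\nu-\KL{\nu}{\mu}\le\int(1-\lambda(f-u))\,d\mu-1=-\lambda\Bigl(\int f\,d\mu-u\Bigr)\le 0 .
\]
Taking the infimum over $\mu$ and then the supremum over $\lambda\in I$ (the endpoint recovered by continuity of $h$) yields $\Kinf(\nu,u,f)\ge\max_{\lambda\in I}h(\lambda)$.

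\emph{Upper bound $\Kinf(\nu,u,f)\le\max_I h$ (explicit optimizer).} Here I would exhibit the optimal measure. Set $Z\triangleq\int(1-\lambda^\star(f-u))^{-1}\,d\nu$; using $\tfrac{f-u}{1-\lambda(f-u)}=\tfrac1\lambda\bigl(\tfrac1{1-\lambda(f-u)}-1\bigr)$ one gets $h'(\lambda^\star)=-(Z-1)/\lambda^\star$ for $\lambda^\star>0$, and the optimality conditions for the concave $h$ then force $Z\le 1$ in every case (indeed $Z=1$ as soon as $\lambda^\star<1/(f_{\max}-u)$). Pick $x^\star\in\Omega$ with $f(x^\star)=f_{\max}$ — such a point exists because $\Omega$ is compact and $f$ continuous, which is precisely where those hypotheses enter — and define $\mu^\star\in\mathcal{M}_1(\Omega)$ by giving it density $(1-\lambda^\star(f-u))^{-1}$ with respect to $\nu$ on the support of $\nu$, plus an extra point mass $1-Z$ at $x^\star$. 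The same algebraic identity shows $\mu^\star$ is admissible, i.e.\ $\int f\,d\mu^\star\ge u$ (when $Z<1$ one gets $\int f\,d\mu^\star=u$ exactly, and this situation forces $\lambda^\star=1/(f_{\max}-u)$); moreover $\nu\ll\mu^\star$ with $d\nu/d\mu^\star=1-\lambda^\star(f-u)$ holding $\nu$-a.e., so $\KL{\nu}{\mu^\star}=\int\log(1-\lambda^\star(f-u))\,d\nu=h(\lambda^\star)=\max_I h$. Combining the two bounds proves the lemma.

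\emph{Main obstacle.} Everything delicate is in the upper bound, and specifically in the boundary regime $\lambda^\star=1/(f_{\max}-u)$: there $1-\lambda^\star(f-u)$ vanishes on $\{f=f_{\max}\}$, so one must first observe that this set is necessarily $\nu$-null (otherwise $Z=\infty$ and $h(\lambda^\star)=-\infty$, contradicting optimality of $\lambda^\star$ since $h(0)=0$), and then check that relocating the leftover mass $1-Z$ to $x^\star$ simultaneously restores feasibility ($\int f\,d\mu^\star=u$) and keeps the divergence exactly equal to $h(\lambda^\star)$. The interior case is comparatively routine, and the remaining technical points — differentiability of $h$ on the open interval via dominated convergence, continuity of $h$ at the endpoint, and the $\{g=0\}$ bookkeeping in the weak-duality step — I would dispatch briefly.
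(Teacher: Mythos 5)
The paper does not actually prove this lemma: it states it and refers to \cite{honda2015nonasymptotic,garivier2022klucbswitch}, so there is no in-paper argument to compare against. Your proof is essentially the standard duality argument from those references, and it is correct: weak duality via $\log t\le t-1$ gives $h(\lambda)\le\KL{\nu}{\mu}$ for every feasible $\mu$, and the explicit $M$-projection $d\mu^\star=(1-\lambda^\star(f-u))^{-1}d\nu+(1-Z)\delta_{x^\star}$ attains the bound, with the identity $\frac{f-u}{1-\lambda(f-u)}=\frac1\lambda\bigl(\frac1{1-\lambda(f-u)}-1\bigr)$ doing the work for both feasibility and the value of the divergence. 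Two small points to tighten. First, in the boundary discussion you conflate two distinct failures: $\nu\{f=f_{\max}\}>0$ forces $h(\lambda^\star)=-\infty$, but $Z=\infty$ can occur even when $h(\lambda^\star)$ is finite (mass of $\nu$ accumulating near $\{f=f_{\max}\}$ without charging it); the correct way to rule out $Z=\infty$ at the maximizer is the one you already state earlier, namely that the left derivative $-(Z-1)/\lambda^\star$ would be $-\infty$, contradicting optimality by concavity --- so the argument survives, but the parenthetical justification should be replaced by the first-order one. Second, you should say a word about $\lambda^\star=0$, where the formula $h'(\lambda^\star)=-(Z-1)/\lambda^\star$ is vacuous: there $h'(0^+)=u-\int f\,d\nu\le 0$ by optimality, so $\nu$ itself is feasible and $\Kinf=0=h(0)$, consistent with your construction since $Z=1$ and $\mu^\star=\nu$. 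Neither point is a real gap.
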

\noindent
This formula appears in \cite{honda2015nonasymptotic,garivier2022klucbswitch}. 
$\mathcal{K}_{\inf}$ can be interpreted as a distance from the measure $\nu$ to a set of all measures $\mu$ satisfying the constraint, where the distance is measured by the KL-divergence.
The measure $\mu$ solving this optimization problem is called moment projection ($M$-projection) or reversed information projection ($rI$-projection), see \cite{csiszar2003information,bishop2006pattern,murphy2022probabilistic}.
Since the KL-divergence is not symmetric, this is different from the more common information projection ($I$-projection), 
\(
\inf_{\mu\in S} \KL{\mu}{\nu},
\)
 appearing, for example, in Sanov-type deviation bounds \cite{sanov1961probability}.
The $I$-projections have an excellent geometric interpretation because the KL can be viewed as a Bregman divergence. The $M$-projections are not Bregman divergences and lack geometric interpretation. However, they are deeply connected to the maximum likelihood estimation when the measure $\nu$ is the empirical measure of a sample \cite[Lemma 3.1]{csiszar2004information}. Additionally, within a Bayesian framework, $M$-projections naturally arise as a rate function in the context of large deviation principles (LDP) \cite{ganesh1999inverse}. $M$-projections also naturally appear in lower (and sometimes upper) bounds for multi-armed bandits \cite{Lai1985asymptotically,burnetas1996optimal}.

\section{A tighter perturbed-KL for Beta distributions}
\label{sec:beta}
In this section, we focus on the Beta distribution \(\text{Beta}(a, b),\) with parameters \( a, b > 0 \). We state in Lemma~\ref{lem:citephenzi2023some} a more general Beta bound than that of \cite{henzi2023some}. In their formulation, 
the perturbation parameter \( \eta \) does not appear explicitly;  
the result is obtained by directly setting \( \eta = 1 + \frac{a - 1}{b + 1} \) in the bound, only considering the regime \( a \geq 1 \).
As a preliminary step, we reformulate the statement by allowing \( \eta \) to range over  
\([0, \min(a,\, 1 + \frac{a - 1}{b + 1})]\),  
where taking the minimum with \(a\) removes the need to assume \(a \geq 1\).  
While choosing \(\eta\) below the upper bound is clearly suboptimal in the Beta case,  
this reformulation sets the stage for the Dirichlet setting, where such flexibility becomes relevant.

The next step, and first main result of this work,  
is to show that the perturbed KL bound remains valid  
for values of the perturbation parameter \(\eta\) beyond  
\( 1 + \frac{a - 1}{b + 1} \), with an enlarged upper bound that depends 
not only on \(a\) and \(b\), but also on \(u\).
This new upper bound is defined as
$$
S(a,b,u)\triangleq  \left\{
    \begin{array}{ll}
      a & \mbox{if } a\leq 1 \mbox{ or } u=0,\\
    a-b\frac{a-1}{b+1} & \mbox{if } a>1, u = 1,
    \\
        a - b\frac{u}{1-u} + \frac{W_0\pa{ \frac{b u^{a - b\frac{u}{1-u}} }{1 - u}\log\pa{\frac{1}{u}}}}{\log\pa{\frac{1}{u}}} & \mbox{if } a> 1, u\in (0,1), 
      
    \end{array}
\right.
$$
where $W_0$ is the principal branch of the Lambert W function. Notice, we get back the traditional constant bound
\(
\min\pa{a,1 + \frac{a - 1}{b + 1}}
\) when $u=1$. We have that \( S(a, b, \cdot) \) is non-increasing (point \((ii)\) of Proposition~\ref{prop:nonincreasing}), so the farther \( u \) is from \( 1 \), the larger the possible perturbation could be.

Since the exact expression for this upper bound involves the Lambert W function,  
which may be cumbersome to use in practice, we also provide the following  
explicit lower bounds for $a>1$ and $u\in (0,1)$:
\begin{align*} 
S(a,b,u)&\geq a-b\pa{b\log\pa{\frac{1}{u}}+\frac{1}{u}-1}\frac{\sqrt{1+\frac{2\pa{\frac{1}{u}-1}^{2}\log\pa{\frac{1}{u}}(a-1)}{\pa{b\log\pa{\frac{1}{u}}+\frac{1}{u}-1}^2}}-1}{\pa{\frac{1}{u}-1}^{2}}
\\&\geq a-b\frac{a-1}{b+\pa{\frac{1}{u}-1}/\log\pa{\frac{1}{u}}}\\&\geq a-b\frac{a-1}{b+1} = 1 + \frac{a - 1}{b + 1},
\end{align*}
where the last lower bound is the one used in \cite{henzi2023some}.

The choice of \( S(a,b,u) \) is motivated by the proof technique: to upper-bound \( h(u) = \PP{B \geq u} \) by  
\(
g(u) = \PP{B \geq x} \exp\big(-t \kl{x}{u}\big),
\)
with \( x = \frac{a - \eta}{a + b - \eta} \) and \( t = a + b - \eta \), the analysis of the derivatives \( g' \) and \( h' \) reduces to ensuring that 
\begin{align}
\label{eq:fondeq}
\frac{u - x}{1 - x} u^{-\eta} \leq 1,
\end{align}
which holds by construction whenever \( \eta \leq S(a,b,u) \) (see Proposition~\ref{prop:nonincreasing}).
The aforementioned lower bounds are derived by first taking the logarithm of both sides of \eqref{eq:fondeq}. We then use the inequality \( \log(1 - x) \leq -x - \frac{x^2}{2} \leq -x, \) for \( x \geq 0 \), 
to relax the logarithmic expression.

The proof technique for the following lemma draws inspiration from the approach used in \cite{henzi2023some}. Specifically, the proof involves examining the behavior of a function~\( J \). In \cite{henzi2023some}, when the perturbation \(\eta\) is equal to \(1 + \frac{a - 1}{b + 1}\), the authors conclude the proof by demonstrating that \( J \) is monotonic on $[(a-\eta)/(a+b-\eta), 1]$. In our analysis, by exploiting the expression of \( J \) more thoroughly, we conclude under a weaker condition on \( J \), specifically that \( J - 1 \) changes its sign at most once on $[(a-\eta)/(a+b-\eta), 1]$, allowing us to be more aggressive in determining the upper bound of \(\eta\).

\begin{lemma}
Let $B\sim\text{Beta}(a,b)$, where $a,b> 0$.
Let $u\in [0,1]$ and $\eta\in [0,S(a,b,u)]$.  Then, if $u> (a-\eta)/(a+b-\eta)$,
\[\log\PPs{}{B\geq u} - \log\PP{B\geq \frac{a-\eta}{a+b-\eta}} \leq -(a+b-\eta)\kl{\frac{a-\eta}{a+b-\eta}}{u}.\]
\label{lem:citephenzi2023some}
\end{lemma}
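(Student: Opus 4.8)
The plan is to compare the two functions $h(u)=\PPs{}{B\ge u}$ and
$g(u)=\PPs{}{B\ge x}\exp\!\bigl(-t\,\kl{x}{u}\bigr)$ on the interval
$[x,1]$, where $x=\frac{a-\eta}{a+b-\eta}$ and $t=a+b-\eta$, and to show
$h\le g$ there. First I would record the boundary behaviour: at $u=1$ we
have $h(1)=g(1)=0$ (or rather $\log h(1)=\log g(1)=-\infty$, since
$\kl{x}{1}=\infty$ for $x<1$), so it suffices to prove $h\ge g$ is
\emph{impossible}, i.e. to control the sign of $h-g$ by moving from $u=1$
leftwards. Equivalently, writing $\varphi=\log h-\log g$, one has
$\varphi(1^-)=0$ and the goal is $\varphi\le 0$ on $(x,1)$; I would do
this by analysing $\varphi'$. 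A direct computation gives
$h'(u)=-c_{a,b}\,u^{a-1}(1-u)^{b-1}$ with $c_{a,b}=\Gamma(a+b)/(\Gamma(a)\Gamma(b))$,
and $(\log g)'(u)=-t\,\partial_u\kl{x}{u}=-t\bigl(\frac{1-x}{1-u}-\frac{x}{u}\bigr)
=\frac{x-u}{u(1-u)}\,t$ — so after substituting $x$ and $t$ this becomes a
clean rational expression in $u$. The inequality $h'\ge g'$ (which, read
together with the shared endpoint at $u=1$, forces $h\le g$ on $[x,1]$)
will then reduce, after dividing through by the common positive factor,
to an inequality of the shape $u^{a-b u/(1-u)}\cdot(\text{stuff})\le 1$,
which is exactly condition \eqref{eq:fondeq}: $\frac{u-x}{1-x}u^{-\eta}\le 1$.

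Next I would isolate the analysis of \eqref{eq:fondeq} itself, which is
where the choice of $S(a,b,u)$ enters. With $x=\frac{a-\eta}{a+b-\eta}$ one
checks $1-x=\frac{b}{a+b-\eta}$ and $u-x=\frac{(a+b-\eta)u-(a-\eta)}{a+b-\eta}$,
so \eqref{eq:fondeq} is equivalent to
$J(u)\triangleq\frac{(a+b-\eta)u-(a-\eta)}{b}\,u^{-\eta}\le 1$. Here I expect the
key structural observation — the one the authors advertise as their
improvement over \cite{henzi2023some} — to be that $J-1$ changes sign at
most once on $[x,1]$, rather than $J$ being monotone. Concretely, $J$ is
a product of an affine increasing factor and the decreasing factor
$u^{-\eta}$; its logarithmic derivative $\frac{J'}{J}=\frac{a+b-\eta}{(a+b-\eta)u-(a-\eta)}-\frac{\eta}{u}$
is a decreasing function of $u$ on $(x,1)$ that is $+\infty$ at $u=x^+$,
so $J$ is unimodal (increases then decreases), whence $J-1$, starting at
$J(x)=0<1$, can cross the level $1$ at most twice — but because $J(x)=0$
and $J$ first increases, "at most once" after the initial region is the
right statement, and it suffices to check $J(1)\le 1$, i.e.
$\frac{(a+b-\eta)-(a-\eta)}{b}=1\le 1$. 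Wait — $J(1)=1$ exactly, so the
crossing is pinned at the endpoint; the real content is that $J\le 1$ on
all of $[x,1]$, which by unimodality plus $J(x)=0$, $J(1)=1$ holds iff
the interior maximum of $J$ does not exceed $1$, and \emph{this} is the
condition that pins down $S(a,b,u)$ via the Lambert-$W$: solving
$\max_u J(u)=1$ for $\eta$ produces precisely the $W_0$ expression in the
definition of $S$. So the plan is: (i) reduce to $J\le 1$ on $[x,1]$;
(ii) show $J$ is unimodal with $J(x)=0$, $J(1)=1$; (iii) compute the
location $u^\ast$ of the max and show $\eta\le S(a,b,u)$ implies
$J(u^\ast)\le 1$, invoking Proposition~\ref{prop:nonincreasing} for the
monotonicity facts about $S$.

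The main obstacle I anticipate is step (iii) — translating the condition
"interior maximum of $J$ is $\le 1$" into the closed form $S(a,b,u)$.
This requires locating $u^\ast$ from $J'(u^\ast)=0$, which is a
transcendental equation mixing a polynomial and a power $u^{-\eta}$, then
substituting back and disentangling the resulting relation between $\eta$
and $(a,b,u)$; the Lambert-$W$ appears exactly because one must invert an
equation of the form $z e^{z}=(\dots)$. I would handle the degenerate
cases separately and cheaply: if $a\le 1$ or $u=0$ the constraint
$\eta\le a$ already forces $x\le 0$ so the hypothesis $u>x$ and the bound
are either trivial or follow from the plain KL bound of \cite{dumbgen1998new}
(the $\eta=0$ case is exactly that bound); if $u=1$ the kl is $+\infty$
and there is nothing to prove. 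For the generic case $a>1$, $u\in(0,1)$ I
would, rather than rederiving the optimal $\eta$ from scratch, simply
verify that $\eta=S(a,b,u)$ satisfies $J(u^\ast)=1$ by plugging in and
using the defining identity $W_0(z)e^{W_0(z)}=z$, and then note that for
$\eta\le S(a,b,u)$ the max of $J$ is monotone in $\eta$ (larger $\eta$
makes the decreasing factor $u^{-\eta}$ smaller on $(0,1)$... but also
shifts $x$, so one must be a little careful — this monotonicity in $\eta$
is the one genuinely fiddly lemma). Once \eqref{eq:fondeq} is secured,
the passage back to $h'\ge g'$ and then to $\log h(u)-\log g(u)\le 0$ by
integrating from $u$ up to $1$ is routine, and rearranging gives exactly
the displayed inequality of Lemma~\ref{lem:citephenzi2023some}.
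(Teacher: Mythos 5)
Your setup coincides with the paper's: compare $h(v)=\PP{B\geq v}$ with $g(v)=\PP{B\geq x}\exp(-t\,\kl{x}{v})$ through the ratio $J=g'/h'$, note $J(x)=0$, and exploit unimodality of $J$. But the core of your plan --- reduce everything to ``$J\leq 1$ on all of $[x,1]$'' and certify this by checking that the \emph{interior maximum} of $J$ is at most $1$ --- cannot prove the stated lemma. A condition of the form ``$\max_{v\in[x,1]}\frac{v-x}{1-x}v^{-\eta}\leq 1$'' does not depend on $u$, whereas $S(a,b,u)$ does; concretely, by Proposition~\ref{prop:nonincreasing}, requiring $\frac{v-x}{1-x}v^{-\eta}\leq 1$ for \emph{every} $v\in(0,1)$ is equivalent to $\eta\leq\inf_{v}S(a,b,v)=S(a,b,1)=1+\frac{a-1}{b+1}$ (since $S(a,b,\cdot)$ is non-increasing), so your route caps $\eta$ at exactly the Henzi--Puke threshold and cannot justify any $\eta\in\bigl(1+\frac{a-1}{b+1},\,S(a,b,u)\bigr]$, which is the entire point of the lemma. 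The hypothesis $\eta\leq S(a,b,u)$ yields \eqref{eq:fondeq} only at the single point $v=u$ (and, by monotonicity of $S$, for $v\leq u$), not on $(u,1)$. A second, related error: your normalized $J(v)=\frac{(a+b-\eta)v-(a-\eta)}{b}v^{-\eta}$ with $J(1)=1$ is \emph{not} the ratio $g'/h'$; the true $J$ carries a multiplicative constant involving $\PP{B\geq x}$ and Gamma functions, and $J(1)$ can lie on either side of $1$ --- the paper's proof hinges on exactly this case split.

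What the paper actually does is weaker than uniform domination: it shows $1-J$ changes sign at most once on $(x,1)$, then integrates $(g-h)'=(J-1)h'$ either over $[x,u]$ (anchored at $g(x)=h(x)$, needing $J\leq 1$ there only) or over $[u,1]$ (anchored at $g(1)=h(1)=0$, needing $J\geq 1$ there only). Your final step also has the directions crossed: with $J\leq 1$ on $[u,1]$ and $h'\leq 0$, the function $g-h$ is nondecreasing, so anchoring at $v=1$ gives $g(u)\leq h(u)$, the reverse of what is wanted; ``integrating from $u$ up to $1$'' requires $J\geq 1$ there. Finally, $S(a,b,u)$ and its Lambert-$W$ form arise from solving $\frac{u-x}{1-x}u^{-\eta}=1$ for $\eta$ at the fixed threshold $u$ (Proposition~\ref{prop:nonincreasing}$(i)$), not from setting the interior maximum of $J$ equal to $1$. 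In the delicate sub-case $J(1)\leq 1$, the paper additionally verifies $u\leq x/(1-1/\eta)$ so that $J$ is still increasing on $[x,u]$ and the pointwise bound $J(u)\leq 1$ propagates to all of $[x,u]$; this step has no counterpart in your outline.
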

\begin{proof} 
Let $B\sim\text{Beta}\pa{tx + \eta, t(1-x)}$, with $t=a+b-\eta$ and $x=\frac{a-\eta}{t}$. We assume that $u>x$.
Consider $g(v)\triangleq \PP{B \geq x}\exp\pa{-t \kl{x}{v}}$ and $h(v)\triangleq \PP{B \geq v}$. We have $g(x)=h(x)$ and $g(1)=h(1)=0$. Moreover, 
\begin{align*}h'(v) &= \frac{-\Gamma\pa{t+\eta}}{\Gamma\pa{tx + \eta}\Gamma\pa{t(1-x)}}v^{tx + \eta -1}(1-v)^{t(1-x) - 1}\leq 0,\\g'(v) &= \PP{B \geq x} t\pa{\frac{v}{x}}^{-\eta}\pa{ \frac{1-v}{1-x}- \frac{v}{x} }\pa{\frac{v}{x}}^{tx + \eta - 1} \pa{\frac{1-v}{1-x}}^{t(1-x) - 1} \\&= \underbrace{\PP{B \geq x} \frac{tx^{\eta - 1}\Gamma\pa{tx + \eta}\Gamma\pa{t(1-x)}}{(1-x)\Gamma\pa{t+\eta}} {v}^{-\eta}\pa{v - {x}{}}}_{J(v)} h'(v),\\J'(v) &= \PP{B \geq x}  \frac{tx^{\eta - 1}\Gamma\pa{tx + \eta}\Gamma\pa{t(1-x)}}{(1-x)\Gamma\pa{t+\eta}}\pa{1+\eta\pa{\frac{x}{v} - 1} }v^{-\eta}.
\end{align*}
$J(x)=0$, so the proof is done if we prove that \( 1 - J \) either maintains a constant sign on \( (x, u) \) or changes sign exactly once on \( (x, u) \) but not on \( (u,1) \). In the first case, we obtain  
\[
g(u)-h(u) = \int_{x}^u \pa{1-J(v)}(-h'(v))dv \geq 0,
\]  
while in the second case, we have  
\[
g(u)-h(u) = \int_{u}^1 \pa{J(v)-1}\pa{-h'(v)}dv \geq 0.
\]
If $\eta \in [0, 1/(1-x)]$, then we see that $J'\geq 0$, so \( J - 1 \) changes its sign at most once on \( (x,1) \). If $\eta>1/(1-x)$, \( J \) increases from \( 0 \) to a maximum over the segment $[x,\frac{x}{1-1/\eta}]$ and then decreases on $[\frac{x}{1-1/\eta}, 1]$. If \( J(1) > 1 \), then \( J - 1 \) changes its sign exactly once on \( (x,1) \). Now, if $J(1)\leq 1$, we have $J(u)=J(1)\frac{u-x}{1-x}u^{-\eta}\leq \frac{u-x}{1-x}u^{-\eta},$ which is upper bounded by~$1$ from point $(i)$ of Proposition~\ref{prop:nonincreasing} (it is at this point that the definition of $S$ is invoked). We thus need to determine if \( u \) is lower (no change of sign on \([x, u]\)) or higher (two potential changes of sign on \([x, u]\)) than \(\frac{x}{1 - 1/\eta}\).

Isolating $x$, we get $\frac{u-u^{\eta}}{1-u^{\eta}} \leq x$. Thus, from the inequality $u^{1-\eta}+(\eta-1)u-\eta \geq 0$ (obtained by noting that the derivative with respect to \( u \) is negative), we get 
\(
 (1-1/\eta)u \leq \frac{u-u^{\eta}}{1-u^{\eta}} \leq x ,
\)
so $u\in [x,\frac{x}{1-1/\eta}]$, i.e., $1-J\geq 0$  on $[x,u]$.
\end{proof}

The following corollary expresses the above tail bound for a Beta random variable~\(B\) in a form involving the Dirichlet vector \((B, 1 - B)\). This formulation is useful as it naturally generalizes to the multivariate context of Dirichlet distributions and DPs. Moreover, the upper bound on \(\eta\) is designed to not depend directly on the weighting in the vector \((B, 1 - B)\), but rather on some lower bounds on these weights (leveraging the non-increasing monotonicity of \( S(a, b, \cdot) \)), which is interesting when they are themselves independent random variables that can't be used outside the probability, e.g., when they form Dirichlet-weighted sums. 

\begin{corollary}
\label{cor:citephenzi2023some}
Let $1\geq v \geq v'>u\geq 0$ and $1\geq w\geq w'\geq 0$. Let $B\sim\text{Beta}(a,b)$, where $a,b> 0$. Let  \(0\leq\eta\leq S\pa{a,b,\frac{(u-w')^+}{v'-u+(u-w')^+}}.\) 
Then, \[
\log\PP{Bv + (1-B)w \geq u}
\leq 
-(a+b-\eta)\Kinf\pa{\begin{pmatrix} \frac{a-\eta}{a+b-\eta} \\ \frac{b}{a+b-\eta} \end{pmatrix} , u, \begin{pmatrix}   v \\ w \end{pmatrix}}.
\]
\end{corollary}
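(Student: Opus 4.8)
The plan is to reduce the linear event $\{Bv+(1-B)w\ge u\}$ to a one-sided Beta tail, invoke Lemma~\ref{lem:citephenzi2023some}, and recognize the resulting binary $\kl$ as exactly the two-point $\Kinf$ on the right-hand side. Note $v\ge v'>u$, so $v>u$. I would first clear the degenerate case $w\ge u$ (which includes $u=0$): then $\mu_1 v+\mu_2 w$ is a convex combination of two reals that are both $\ge u$, hence $\ge u$ for every $\mu\in\cM_1([2])$, so $\Kinf=0$; the right-hand side of the corollary is then $0$ while the left-hand side is at most $\log 1=0$. So assume henceforth $w<u$. Then $w'\le w<u<v'\le v$, the argument of $S$ in the hypothesis equals $\tfrac{u-w'}{v'-w'}$, and since $v>w$,
\[
\PP{Bv+(1-B)w\ge u}=\PP{B\ge \tilde u},\qquad \tilde u\triangleq \frac{u-w}{v-w}\in(0,1).
\]

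Next I would evaluate the two-point $\Kinf$ explicitly. With $\nu_1\triangleq\tfrac{a-\eta}{a+b-\eta}$ and $\nu=(\nu_1,1-\nu_1)$, the constraint $\mu_1 v+\mu_2 w\ge u$ is equivalent (since $v>w$) to $\mu_1\ge\tilde u$, and on $\cM_1([2])$ the map $\mu\mapsto\KL{\nu}{\mu}$ is exactly the binary divergence $\kl{\nu_1}{\mu_1}$, which is convex in $\mu_1$ with minimum $0$ at $\mu_1=\nu_1$. Hence the feasible minimizer is $\mu_1=\max(\nu_1,\tilde u)$, so $\Kinf\pa{\nu,u,(v,w)}=\kl{\nu_1}{\max(\nu_1,\tilde u)}$, which is $0$ when $\nu_1\ge\tilde u$ and $\kl{\nu_1}{\tilde u}$ when $\nu_1<\tilde u$. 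If $\nu_1\ge\tilde u$ the right-hand side is again $0$ and we are done as above. If $\nu_1<\tilde u$, i.e.\ $\tilde u>\tfrac{a-\eta}{a+b-\eta}$, I would apply Lemma~\ref{lem:citephenzi2023some} with threshold $\tilde u$ (noting $\eta\le a<a+b$, so $a+b-\eta>0$), which gives
\[
\log\PP{B\ge\tilde u}\ \le\ \log\PP{B\ge\nu_1}-(a+b-\eta)\kl{\nu_1}{\tilde u}\ \le\ -(a+b-\eta)\kl{\nu_1}{\tilde u},
\]
i.e.\ exactly the asserted bound.

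The step requiring care---and the main, if modest, obstacle---is checking the hypothesis $\eta\le S(a,b,\tilde u)$ needed to apply Lemma~\ref{lem:citephenzi2023some}, given only $\eta\le S\pa{a,b,\tfrac{u-w'}{v'-w'}}$. Since $S(a,b,\cdot)$ is non-increasing (point $(ii)$ of Proposition~\ref{prop:nonincreasing}), it suffices to show $\tfrac{u-w'}{v'-w'}\ge\tilde u=\tfrac{u-w}{v-w}$; this follows because $(s,r)\mapsto\tfrac{u-r}{s-r}$ is decreasing in $s$ and decreasing in $r$ on the region $\{s>u>r\}$, applied with $v'\le v$ and $w'\le w$. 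This monotone comparison is precisely why the statement is phrased with the auxiliary lower bounds $v',w'$ rather than $v,w$ themselves, which matters when $v,w$ are random (e.g.\ Dirichlet weights) and cannot be placed inside $S$. Assembling the three cases ($w\ge u$; $w<u$ with $\nu_1\ge\tilde u$; $w<u$ with $\nu_1<\tilde u$) yields the corollary.
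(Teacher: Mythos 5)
Your proof is correct and follows essentially the same route as the paper's: rewrite the event as a Beta tail at threshold $\frac{u-w}{v-w}$, use the monotonicity of $S(a,b,\cdot)$ from Proposition~\ref{prop:nonincreasing}$(ii)$ to transfer the hypothesis on $\eta$ from the primed to the unprimed threshold, apply Lemma~\ref{lem:citephenzi2023some}, and identify the binary $\kl$ with the two-point $\Kinf$. The only difference is that you explicitly separate the degenerate cases ($w\geq u$ and $\tilde u\leq \nu_1$), which the paper's one-line chain of (in)equalities glosses over; this is a welcome clarification rather than a new argument.
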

\begin{proof}
 $S(a,b,\cdot)$ is non-increasing from point $(ii)$ of Proposition~\ref{prop:nonincreasing}. Therefore, we have $S\pa{a,b,\frac{(u-w)^+}{v-u+(u-w)^+}}\geq S\pa{a,b,\frac{(u-w')^+}{v'-u+(u-w')^+}}$, and from Lemma~\ref{lem:citephenzi2023some},
\begin{align*}\log\PP{Bv + (1-B)w \geq u} &= \log\PP{B \geq {\frac{(u-w)^+}{v-u+(u-w)^+}}} \\&\leq -(a+b-\eta)\kl{\frac{a-\eta}{a+b-\eta}}{{ \frac{(u-w)^+}{v-u+(u-w)^+}}} \\&=-(a+b-\eta)\Kinf\pa{\begin{pmatrix} \frac{a-\eta}{a+b-\eta} \\ \frac{b}{a+b-\eta} \end{pmatrix} , u, \begin{pmatrix}   v \\ w \end{pmatrix}}.\end{align*}    
\end{proof}

\section{Perturbed-KL for Dirichlet distributions and DPs}
In this section, we focus on Dirichlet distributions and, more generally, on DPs. Specifically, we consider a base distribution \(\nu_0 \in \mathcal{M}_1(\Omega)\) with a scale parameter \(\alpha > 0\). The case of the Dirichlet distribution is obtained by considering a finite space \(\Omega\).
Building upon Corollary~\ref{cor:citephenzi2023some}, we extend the perturbed-KL bounds to \(X\sim\DP(\alpha \nu_0)\) in Theorem~\ref{thm:main}.
More precisely, we consider the deviation of a Dirichlet-weighted sum (or integral for DPs), and the binary KL divergence is replaced by \(\Kinf\) in the upper bound (in the same way as in Corollary~\ref{cor:citephenzi2023some}). It is important to note that the perturbation is now represented by some measure \(\eta \in \mathcal{M}(\Omega)\). For the sake of consistency, we retain the notation \(\eta\), keeping in mind that in this context, it is not a scalar. 

The generalization relies on introducing a measure $\eta_0 \in \mathcal{M}(\Omega)$, with $\eta_0 \leq \alpha \nu_0$, which specifies the region where the perturbation may act (via $\eta \leq \eta_0$).  
More precisely, the perturbation can only target the lowest values of $f$ within $\eta_0$.
 We note that in $\Kinf$, the perturbation has the strongest effect on the largest values of $f$.  
Therefore, $\eta_0$ should ideally be concentrated on regions where $f$ attains high values.
The function~$S$ still determines the maximal admissible perturbation, now controlling the mass $m$ of $\eta$.  
By Proposition~\ref{prop:nonincreasing}, $(ii)$, $S$ is non-decreasing in $\eta_0(\Omega)$.   
When $\eta_0(\Omega) \leq 1$, we may take $m = S= \eta_0(\Omega)$, yielding $\eta = \eta_0$ which can be concentrated on the largest values of~$f$.   
For $\eta_0(\Omega) > 1$, we have $m \leq S< \eta_0(\Omega)$, so some mass is lost when passing from $\eta_0$ to~$\eta$; by construction, this loss occurs on the highest values of $f$.  
Thus, increasing $\eta_0(\Omega)$ provides more available mass but may reduce the perturbation at $f$'s maximum, creating a trade-off.
For instance, if $\Omega$ is finite and $\alpha\nu_0$ assigns more than one unit of mass to the maximizers of $f$, one may take $\eta_0$ concentrated on these points, equal to $\alpha\nu_0$, as both the loss and the gain occur at $f$'s maximizers.   
In contrast, if $\alpha\nu_0$ assigns less than one unit to the maximizers and the second-largest value of $f$ is much smaller, reallocating mass away from the maximum to increase $\eta_0(\Omega)$ may be detrimental.

The proof of Theorem~\ref{thm:main} builds upon the decomposition $\Omega = I \sqcup J$, where $I = \operatorname{supp}(\eta_0)$ is the region on which the perturbation can directly act, and $J$ is its complement. Then, we consider the beta sample defined by 
$B=X(I)$, where $X\sim\DP\pa{\alpha \nu_0}$, 
and see that $\int{f}dX=B \int_I f dX / X(I) + (1-B)\int_J f dX / X(J)$. The trick is then to see that $B$ and $\int_I f dX/X(I)$ (resp. $\int_J f dX/X(J)$) are independent (so that Corollary~\ref{cor:citephenzi2023some} can be used). Then, what we obtain is an expectation (over some DP) of some exponential bound, with a binary KL as a rate function. Leveraging the connection between \( \Kinf \) and the MGF of the Gamma process, we can convert this into the desired  \( \Kinf \) exponential bound. 

One remaining obstacle in the proof is that the weights of the Beta vector are still perturbed.  
In the present setting, since $\eta$ is by definition concentrated on the lower values of $f$ within $I$, we can replace the perturbed DP with an unperturbed DP in the weights.
\begin{theorem}
Let $f\in\cC\pa{\Omega}$ and $u\in [f_{\min},f_{\max})$, where $f_{\max}\triangleq \max_{\Omega} f$, $f_{\min}\triangleq \min_{\Omega} f$. 
Let $\eta_0\in \cM\pa{f^{-1}\pa{(u,\infty)}}$ such that $\eta_0\leq \alpha \nu_0$. Let $m \geq 0$ such that \[m \leq S\pa{\eta_0(\Omega),\alpha-\eta_0(\Omega),\frac{(u-f_{\min})^+}{\min_{\text{supp}(\eta_0)}f-u+(u-f_{\min})^+}}.\] 
We consider \(
\eta \triangleq \argmin_{\mu\leq \eta_0,~\mu(\Omega)=m} \int f d\mu.
\)
Then,
\[\log \PPs{X\sim\DP(\alpha \nu_0)}{\int f dX\geq u}\leq -{\pa{{\alpha - m}}\Kinf\pa{\frac{\alpha\nu_0-\eta}{\alpha - m}, u, f}}.\] 
\label{thm:main}
\end{theorem}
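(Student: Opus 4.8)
The plan is to lift the bivariate estimate of Corollary~\ref{cor:citephenzi2023some} to the Dirichlet-process setting using the aggregation/disintegration structure of the DP, and then to remove the residual weight-perturbation via the moment-generating-function identity of the Gamma process. The step I expect to require genuine work is this last one: after conditioning, the ``weights'' of the Beta vector are integrals against the \emph{perturbed} process $\DP(\eta_0)$, whose total mass $\eta_0(\Omega)$ does not match the exponent $\eta_0(\Omega)-m$ that the $\Kinf$ bound produces, so one must argue that replacing $\DP(\eta_0)$ by the \emph{unperturbed} $\DP(\eta_0-\eta)$ only enlarges the bound --- which is true precisely because $\eta$ sits on the lowest values of $f$.

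Concretely, I would first make two harmless reductions: since $\Kinf$, the event $\{\int f\,dX\ge u\}$, the optimal measure $\eta$, and the scalar fed to $S$ in the hypothesis are all invariant under $(f,u)\mapsto(\beta f+\gamma,\beta u+\gamma)$ with $\beta>0$, assume $f\colon\Omega\to[0,1]$ with $f_{\min}=0$, $f_{\max}=1$, so the hypothesis becomes $m\le S(m_0,\alpha-m_0,u/\underline v)$ with $m_0:=\eta_0(\Omega)$ and $\underline v:=\min_{\operatorname{supp}(\eta_0)}f>u$; and dispatch the degenerate cases $m=0$ (the unperturbed $\Kinf$ bound), $m_0\in\{0,\alpha\}$ separately, assuming $0<m\le m_0<\alpha$ (note $S(m_0,\alpha-m_0,\cdot)\le m_0$). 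By superposition of Gamma processes, write $G\sim\mathcal G(\alpha\nu_0)$ as $G_1+G_2$ with $G_1\sim\mathcal G(\eta_0)$, $G_2\sim\mathcal G(\alpha\nu_0-\eta_0)$ independent; then $B:=G_1(\Omega)/G(\Omega)\sim\text{Beta}(m_0,\alpha-m_0)$ and $Y_i:=G_i/G_i(\Omega)$ are a $\DP(\eta_0)$ and a $\DP(\alpha\nu_0-\eta_0)$, with $B,Y_1,Y_2$ mutually independent (using $X(\Omega)\perp X$ for DPs), and $\int f\,dX=Bv+(1-B)w$ where $v:=\int f\,dY_1\in[\underline v,1]$, $w:=\int f\,dY_2\in[0,1]$ a.s. Conditioning on $(Y_1,Y_2)$ and using $B\perp(Y_1,Y_2)$, Corollary~\ref{cor:citephenzi2023some} with $a=m_0$, $b=\alpha-m_0$, perturbation $m$, $v'=\underline v$, $w'=0$ (its hypothesis on the perturbation is exactly the one assumed) bounds $\PP{\int f\,dX\ge u\mid Y_1,Y_2}$ by $\exp(-(\alpha-m)\Kinf(\tbinom{(m_0-m)/(\alpha-m)}{(\alpha-m_0)/(\alpha-m)},u,\tbinom{v}{w}))$; a log-sum-inequality (weak-duality) estimate then shows that for each fixed $\lambda\in[0,1/(1-u))$ this is $\le(1-\lambda(v-u))^{-(m_0-m)}(1-\lambda(w-u))^{-(\alpha-m_0)}$, and taking expectations with $Y_1\perp Y_2$ factorizes it into $\mathbb E_{Y_1}[(1-\lambda(v-u))^{-(m_0-m)}]\cdot\mathbb E_{Y_2}[(1-\lambda(w-u))^{-(\alpha-m_0)}]$.

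The two factors are treated by the Gamma identity $\mathbb E_{Y\sim\DP(\beta)}[(1-\int g\,dY)^{-\beta(\Omega)}]=\exp(-\int\log(1-g)\,d\beta)$ for $g\in\cC(\Omega)$ with $\max g<1$ (write $\int g\,dG=G(\Omega)\int g\,dY$ for $G\sim\mathcal G(\beta)$ and compare the Gamma MGF with the Lévy--Khintchine formula). With $g=\lambda(f-u)$, the $Y_2$-factor equals $\exp(-\int\log(1-\lambda(f-u))\,d(\alpha\nu_0-\eta_0))$ directly, since the exponent $\alpha-m_0$ is the mass of $\alpha\nu_0-\eta_0$. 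For the $Y_1$-factor the exponent $m_0-m$ is smaller than the mass $m_0$ of $\eta_0$, and this is the crux: I would invoke the agglomeration property $Y_1\overset{d}{=}\rho Z_1+(1-\rho)Z_2$ with $\rho\sim\text{Beta}(m_0-m,m)$, $Z_1\sim\DP(\eta_0-\eta)$, $Z_2\sim\DP(\eta)$ independent, and exploit that $\eta=\argmin_{\mu\le\eta_0,\,\mu(\Omega)=m}\int f\,d\mu$ is the restriction of $\eta_0$ to a sublevel set $\{f\le c\}$ (up to reallocation within $\{f=c\}$): then $\int\lambda(f-u)\,dZ_2\le\lambda(c-u)\le\int\lambda(f-u)\,dZ_1$ a.s., hence $\int\lambda(f-u)\,dY_1\le\int\lambda(f-u)\,dZ_1$ a.s., and since $t\mapsto t^{-(m_0-m)}$ is non-increasing on $(0,\infty)$, $\mathbb E_{Y_1}[(1-\lambda(v-u))^{-(m_0-m)}]\le\mathbb E_{Z_1}[(1-\int\lambda(f-u)\,dZ_1)^{-(m_0-m)}]=\exp(-\int\log(1-\lambda(f-u))\,d(\eta_0-\eta))$ (both sides are $1$ when $m=m_0$). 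Multiplying the two estimates yields $\PP{\int f\,dX\ge u}\le\exp(-\int\log(1-\lambda(f-u))\,d(\alpha\nu_0-\eta))$ for every $\lambda\in[0,1/(1-u))$; finally I would optimize over $\lambda$ and invoke Lemma~\ref{lem:Variational formula for Kinf} for $\nu=(\alpha\nu_0-\eta)/(\alpha-m)$ (the passage from the open to the closed interval of admissible $\lambda$ being justified by concavity of $\lambda\mapsto\int\log(1-\lambda(f-u))\,d(\alpha\nu_0-\eta)$), which produces exactly the asserted bound.
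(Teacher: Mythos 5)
Your proof is correct, and it shares the paper's core ingredients --- superposition of Gamma processes, a conditional application of Corollary~\ref{cor:citephenzi2023some}, and the Gamma-process Laplace functional to reassemble the full $\Kinf$ --- but it organizes them in a genuinely different way. The paper splits $\Omega$ by support ($I=\operatorname{supp}(\eta_0)$ versus its complement), which forces the product-space lift to $\Omega\times\{0,1\}$ so that $\alpha\nu_0$ and $\eta_0$ agree on $I$; it then removes the perturbation from the weights \emph{before} invoking the corollary, via the a.s.\ comparison $\int_I f\,d(G+G')/(G+G')(I)\le\int_I f\,dG/G(I)$ with $G'\sim\mathcal{G}(\eta)$, and finally applies Proposition~\ref{prop:sup_PI} as a black box. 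You instead split the base measure as $\eta_0+(\alpha\nu_0-\eta_0)$, which makes the lift unnecessary, apply the corollary with the unperturbed component weights $v=\int f\,dY_1$, $w=\int f\,dY_2$, and only afterwards resolve the mass/exponent mismatch ($m_0$ in the base measure of $Y_1$ versus $m_0-m$ in the exponent) inside the MGF computation, via the agglomeration $Y_1\overset{d}{=}\rho Z_1+(1-\rho)Z_2$ and the monotonicity of $t\mapsto t^{-(m_0-m)}$ --- the same structural fact about $\eta$ (it is carried by the lowest $f$-values in $\operatorname{supp}(\eta_0)$) doing the work in both arguments, just at a different stage. Since your intermediate expression no longer matches the hypotheses of Proposition~\ref{prop:sup_PI} (the outer expectation is over $\DP(\eta_0)\otimes\DP(\alpha\nu_0-\eta_0)$ rather than a single $\DP(t\nu)$), you rightly inline its content (variational lower bound at a fixed $\lambda$, swap of minimum and expectation, L\'evy--Khintchine identity), and the passage from the open to the closed $\lambda$-interval is indeed covered by concavity. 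What the paper's route buys is modularity (Proposition~\ref{prop:sup_PI} is a reusable statement); what yours buys is the elimination of the product-space reduction and a decomposition that works verbatim when the supports of $\eta_0$ and $\alpha\nu_0-\eta_0$ overlap.
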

\begin{proof}
Up to replacing \(\Omega\) by the product metric space \(\Omega \times \{0,1\}\) equipped with
\(
\mathcal{B}(\Omega \times \{0,1\}) = \mathcal{B}(\Omega) \otimes \mathcal{P}(\{0,1\}),
\)
and \(\alpha \nu_0\) by the measure defined as
\(
A \times \{0\} \mapsto \eta_0(A),\) and \(A \times \{1\} \mapsto \alpha \nu_0(A) - \eta_0(A),
\) for $A\in \mathcal{B}(\Omega),$
we may assume, without loss of generality, that \(\alpha \nu_0\) coincides with \(\eta_0\) on
\(
I \triangleq \operatorname{supp}(\eta_0).\) Let $J\triangleq \Omega \backslash I$, $t\triangleq \alpha - \eta\pa{\Omega}$ and $\nu\triangleq (\alpha \nu_0 - \eta)/t$. 

We use the convention $\min_{\emptyset} f=\max_{\emptyset} f=\frac{\int_A f d\mu}{\mu(A)}= f_{\max}$, for all $A\in \cB(\Omega)$ and $\mu\in \cM(\Omega)$ such that $\mu(A)=0$.
From the definition of $\eta$, for all $x\in \text{supp}(\eta_0-\eta)$ and all $y\in \text{supp}(\eta)$, we have $x\geq y$.
We consider $G, G', B\sim \cG\pa{t\nu} \otimes \cG\pa{\eta} \otimes\text{Beta}\pa{(t\nu+\eta)(I),t\nu(J)}$. Then, \[B \frac{G(\cdot\cap I)+G'}{(G+G')(I)} +(1-B)\frac{G(\cdot\cap J)}{G(J)} \sim \DP(t\nu+\eta).\]
Since
\[\frac{\int_I f dG'}{G'(I)} \leq \max_{\text{supp}(\eta)}f \leq \min_{\text{supp}(\eta_0-\eta)}f \leq\frac{\int_I f dG}{G(I)}\quad \text{a.s.,}\]
we have that 
\[\frac{\int_I f d(G+G')}{(G+G')(I)} \leq \frac{\int_I f dG}{G(I)}\quad \text{a.s.,}\]
thus giving
\begin{align}
    &\nonumber\PP{B \frac{\int_I f d(G+G')}{(G+G')(I)} +(1-B)\frac{\int_J f dG}{G(J)} \geq u}
    \leq\PP{B \frac{\int_I f dG}{G(I)} +(1-B)\frac{\int_J f dG}{G(J)} \geq u}\\
    &= \EEs{X\sim \DP(t\nu)}{\PPcc{B \frac{\int_I f dX}{X(I)} + (1-B) \frac{\int_J f dX}{X(J)}}{X}} \nonumber \\
    &\leq
    \EEs{X\sim \DP(t\nu)}{\exp\pa{-t\Kinf\pa{\begin{pmatrix}
        \nu(I) \\ \nu(J)
    \end{pmatrix}
  , u, \begin{pmatrix}
        \frac{\int_I f dX}{X(I)} \\ \frac{\int_J f dX}{X(J)}
    \end{pmatrix}}{}}}\label{citephenzi2023some}
    \\&\leq\label{sup_PI}
    \exp\pa{-t\Kinf\pa{\nu,u,f}}.
    \end{align}
    where \eqref{citephenzi2023some} is from Corollary~\ref{cor:citephenzi2023some} and \eqref{sup_PI}
is from Proposition~\ref{prop:sup_PI}. 
\end{proof}
\section{Conclusion}
In this work, we began by refining the existing perturbed KL bounds for the Beta distribution, allowing for larger perturbations and thus providing tighter bounds. 
We then generalized these improved bounds to Dirichlet distributions and DPs, where the perturbation in this context is represented by a measure, opening up new avenues for research and application in statistical theory and machine learning.

\section{Appendix}
\begin{proposition}
\label{prop:nonincreasing}
 Let $a > 1$, $b>0$. For $u\in (0,1)$ and $\eta\geq 0$, we define
\begin{align*} 
R(\eta, u)&\triangleq\frac{u-\frac{a-\eta}{a+b-\eta}}{1-\frac{a-\eta}{a+b-\eta}}u^{-\eta} -1 = \frac{ub-{(1-u)(a-\eta)}}{b}u^{-\eta} -1.
\end{align*}
Then,
\begin{itemize}
    \item[$(i)$] For a fixed $u\in (0,1)$, the function $R(\cdot,u)$ admits a unique root in $\R_+$, equal to $S(a,b,u)$. We have $S(a,b,u)<a$ and $R(\eta, u)\leq 0$ for $\eta\in [0,S(a,b,u)]$.
    \item[$(ii)$] $S(a,b,\cdot)$ is non-increasing on $(0,1)$. 
    $S(\cdot,a+b-\cdot,u)$ is non-decreasing on $(0,a+b)$.
\end{itemize}
\end{proposition}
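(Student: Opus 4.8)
The plan is to recast the defining equation $R(\eta,u)=0$ in a form where uniqueness and both monotonicity statements become transparent. The key algebraic identity is that, for $u\in(0,1)$, multiplying through by $bu^{\eta}>0$ and rearranging turns $R(\eta,u)=0$ into $\Phi(\eta,u)=a+b$, where $\Phi(\eta,u)\triangleq \eta+b\frac{1-u^{\eta}}{1-u}$; the same manipulation shows that $R(\eta,u)$ has the same sign as $\Phi(\eta,u)-(a+b)$. Since $\partial_\eta\Phi(\eta,u)=1+\frac{b(-\log u)u^{\eta}}{1-u}>1$, the map $\eta\mapsto\Phi(\eta,u)$ is a strictly increasing bijection of $[0,\infty)$ onto $[0,\infty)$ (it vanishes at $0$), so $\Phi(\cdot,u)=a+b$ has exactly one solution. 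This immediately yields the existence and uniqueness in $\R_+$ of the root of $R(\cdot,u)$, together with $R(\cdot,u)\le 0$ below that root and $>0$ above it, hence the claim that $R(\eta,u)\le 0$ for $\eta\in[0,S(a,b,u)]$. For the two bounds: $\Phi(0,u)=0<a+b$ gives $S(a,b,u)>0$, and $\Phi(a,u)=a+b\frac{1-u^{a}}{1-u}>a+b$ (because $1-u^{a}>1-u$ when $a>1$) gives $S(a,b,u)<a$.

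Next I would identify this root with the Lambert-$W$ expression in the definition of $S$. Writing $\eta^{*}\triangleq a-b\frac{u}{1-u}$ and $c\triangleq\log(1/u)$, the equation $\Phi(\eta,u)=a+b$ simplifies to $(\eta-\eta^{*})e^{c\eta}=\frac{b}{1-u}$; substituting $v\triangleq c(\eta-\eta^{*})$ and using $e^{-c\eta^{*}}=u^{\eta^{*}}$ converts it to $ve^{v}=\frac{bc}{1-u}u^{\eta^{*}}$, whose right-hand side is positive, so its unique real solution is $v=W_0\bigl(\frac{bc}{1-u}u^{\eta^{*}}\bigr)$. Unwinding $\eta=\eta^{*}+v/c$ reproduces exactly the third branch of the formula for $S(a,b,u)$. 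I expect this step to be the most tedious but purely mechanical one --- matching the argument of $W_0$ with $\frac{b\,u^{a-bu/(1-u)}}{1-u}\log(1/u)$ just requires care with exponents.

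For $(ii)$, the monotonicity of $S(a,b,\cdot)$ rests on two observations. First, $S(a,b,u)>1$: by the reformulation this is exactly $\Phi(1,u)<a+b$, which holds since $\Phi(1,u)=1+b<a+b$ (here $a>1$ is used). Second, for every \emph{fixed} exponent $\eta>1$, the function $u\mapsto\frac{1-u^{\eta}}{1-u}$ is strictly increasing on $(0,1)$: the numerator of its derivative is $N(u)=1+(\eta-1)u^{\eta}-\eta u^{\eta-1}$, and $N(1)=0$ while $N'(u)=\eta(\eta-1)u^{\eta-2}(u-1)<0$ on $(0,1)$, so $N>0$ there. (This fails when $\eta<1$, which is precisely why knowing $S>1$ is essential.) Hence $u\mapsto\Phi(\eta,u)$ is strictly increasing on $(0,1)$ for every $\eta>1$, so if $u_1<u_2$ in $(0,1)$ and $\eta_i\triangleq S(a,b,u_i)$, then $\Phi(\eta_1,u_2)>\Phi(\eta_1,u_1)=a+b=\Phi(\eta_2,u_2)$, and strict monotonicity of $\Phi(\cdot,u_2)$ forces $\eta_1>\eta_2$; thus $S(a,b,\cdot)$ is decreasing, a fortiori non-increasing.

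Finally, for $S(\cdot,a+b-\cdot,u)$ I would fix $u$ and $c\triangleq a+b$ and let the first argument $s$ vary, so the second argument is $c-s$. On $(0,1]$ the definition gives $S(s,c-s,u)=s$, which is increasing. On $(1,c)$ the reformulation reads $(c-s)(1-u^{\eta})=(c-\eta)(1-u)$ with $\eta=S(s,c-s,u)$, and I would solve this \emph{for $s$} to get the explicit inverse $s=c-(1-u)\frac{c-\eta}{1-u^{\eta}}$, whose derivative in $\eta$ has numerator $(1-u^{\eta})-(c-\eta)u^{\eta}\log u$ --- positive because $1-u^{\eta}>0$, $c-\eta>0$ (as $\eta=S<a<c$), and $\log u<0$. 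So $s\mapsto\eta$ is increasing on $(1,c)$, and since the two pieces match continuously at $s=1$ (where $\eta\to 1$), $S(\cdot,a+b-\cdot,u)$ is non-decreasing on $(0,a+b)$. The only genuinely delicate point in the whole argument is spotting the $\Phi$-reformulation and, within it, that the sign of the auxiliary numerator $N$ depends on whether the exponent exceeds $1$; once these are in place everything reduces to the monotonicity of explicit one-variable functions.
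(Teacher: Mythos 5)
Your proof is correct. For part $(i)$ it lands in essentially the same place as the paper --- both reduce $R(\eta,u)=0$ to a transcendental equation $ve^{v}=y$ with $y>0$ and read off the unique root via $W_0$, and both get $S(a,b,u)<a$ from $u^{a}<u$ --- but your intermediate reformulation $\Phi(\eta,u)=\eta+b\frac{1-u^{\eta}}{1-u}=a+b$, with $\partial_\eta\Phi>1$, packages the existence/uniqueness and the sign statement $R\le 0$ on $[0,S]$ more transparently than the paper, which instead argues from $R(0,u)<0$ plus uniqueness of the root and continuity. For part $(ii)$ your route is genuinely different: the paper writes $S$ as a supremum of a nested family of feasible sets and then checks the sign of $\partial_u \log(1+R(\eta,u))$ by exhibiting its numerator as a sum of two non-increasing functions vanishing at $u=1^-$; you instead exploit that $\Phi(\eta,\cdot)$ is increasing in $u$ for every fixed $\eta>1$ (via the auxiliary inequality $1+(\eta-1)u^{\eta}-\eta u^{\eta-1}>0$) together with the observation $S(a,b,u)>1$ (from $\Phi(1,u)=1+b<a+b$), and for the second monotonicity claim you invert the relation explicitly as $s=c-(1-u)\frac{c-\eta}{1-u^{\eta}}$ and check that this inverse is increasing. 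Your version has the merit of isolating exactly where the hypothesis $a>1$ is used (it guarantees $S>1$, without which the monotonicity of $u\mapsto\frac{1-u^{\eta}}{1-u}$ would fail), and of giving a fully explicit argument for the nestedness that the paper only sketches; the paper's sup-representation, on the other hand, dispatches both halves of $(ii)$ with a single one-line structural observation. Both arguments are sound.
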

\begin{proof}
We provide the proof for each point individually.
\begin{itemize}
\item[$(i)$] Let $u\in (0,1)$. The equation $R(\eta, u)=0$ in the variable $\eta$ is equivalent to
\begin{align*}
    \pa{b\frac{u}{1-u}+\eta -a}\log\pa{\frac{1}{u}}\exp\pa{\pa{\eta-a+b\frac{u}{1-u}}\log\pa{\frac{1}{u}}} = \frac{b{u}^{a-b\frac{u}{1-u}}}{1-u}\log\pa{\frac{1}{u}}.
\end{align*}
Thus, we find that $\pa{b\frac{u}{1-u}+\eta -a}\log\pa{\frac{1}{u}}$ is the unique solution to a transcendental equation involving the Lambert W function. Since the RHS is positive, we have \[\pa{b\frac{u}{1-u}+\eta -a}\log\pa{\frac{1}{u}} = W_0\pa{ \frac{b u^{a - b\frac{u}{1-u}} }{1 - u}\log\pa{\frac{1}{u}}},\]
leading to $\eta=S(a,b,u)$. From $a>1$, we have
\begin{align*}
u^a<u &\iff 
\frac{b u^{a - b\frac{u}{1-u}} }{1 - u}\log\pa{\frac{1}{u}} < \log\pa{\frac{1}{u}} b\frac{u}{1-u}\exp\pa{\log\pa{\frac{1}{u}} b\frac{u}{1-u}},
\\ &\iff {W_0\pa{ \frac{b u^{a - b\frac{u}{1-u}} }{1 - u}\log\pa{\frac{1}{u}}}}{} < \log\pa{\frac{1}{u}} b\frac{u}{1-u},\\
    &\iff  S(a,b,u)<a.
\end{align*}
Since \( R(0, u) = \frac{b u - a(1 - u)}{b} - 1 < 0 \) and the root $S(a,b,u)$ of $R(\cdot,u)$ is unique, by the continuity of this function, it follows that \( R(\eta, u) \leq 0 \) for \( \eta\in [0, S(a,b,u)] \).
\item[$(ii)$] Since $$S(a,b,u)=\sup{\sset{\eta\in[0,a],~{{\frac{u-\frac{a-\eta}{a+b-\eta}}{1-\frac{a-\eta}{a+b-\eta}}u^{-\eta}}}<1}},$$
we first see that $S(\cdot,a+b-\cdot,u)$ is non-decreasing on $(0,a+b)$, as the feasible sets for $\eta$ are nested when the variable increases. The same argument also proves that  $S(a,b,\cdot)$ is non-increasing. More precisely, for $\eta\in [0,a]$ and $u\in\pa{0,1}$, let $f(\eta,u)\triangleq \log(\max\pa{0,1+R(\eta, u)}).$ Then, it is sufficient to show that $f(\eta,\cdot)$ is non-decreasing on $\pa{\frac{{a-\eta}}{{a+b-\eta}},1}$. We have
\[\frac{\partial f(\eta,u)}{\partial u} = \frac{\log\pa{1-\frac{\frac{1-u}{u}(a-\eta)}{b}} + \frac{\pa{a-\eta}\log(1/u)}{\pa{a+b-\eta}{}u - \pa{a-\eta}}}{u\log^2(1/u)},\]
which is non-negative because the numerator is a sum of two non-increasing functions, both of which tend to \( 0 \) as \( u \to 1^- \). 
\end{itemize}
\end{proof}

\begin{proposition}\label{prop:sup_PI}
Let $t>0$, $\nu\in \cM_1(\Omega)$, $f\in\cC\pa{\Omega}$ and $u\in [f_{\min},f_{\max})$, where $f_{\max}\triangleq \max_{\Omega} f$, $f_{\min}\triangleq \min_{\Omega} f$. 
Let $\Pi$ be the set of finite measurable partitions of $\Omega$.
Then
\begin{align*}&\exp\pa{-t\Kinf\pa{\nu,u,f}} \\&= \sup_{(A_1,\dots, A_k)\in \Pi} \EEs{X\sim \DP\pa{t\nu}}{\exp\pa{-t \Kinf\pa{\pa{\nu(A_i)}_{i\in [k]},u,\pa{\frac{\int_{A_i} f dX}{X(A_i)}}_{i\in [k]}}}}.\end{align*}
\end{proposition}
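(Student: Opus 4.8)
The plan is to prove the two inequalities separately, viewing the right-hand side as an approximation of $\Kinf(\nu, u, f)$ from "coarsenings" of $f$. The key structural fact I would use throughout is the variational formula of Lemma~\ref{lem:Variational formula for Kinf}, together with the Gamma-process representation of the DP: if $X \sim \DP(t\nu)$ and $G \sim \cG(t\nu)$ with $X = G/G(\Omega)$, then $G(\Omega) \sim \mathrm{Gamma}(t,1)$ is independent of $X$, and for a partition $(A_i)$ the vector $(G(A_i))_i$ has independent $\mathrm{Gamma}(t\nu(A_i),1)$ coordinates. The crucial MGF identity is $\EE\exp(-\lambda G(A)) = (1+\lambda)^{-t\nu(A)}$ for $\lambda \geq 0$, so that $\EE\prod_i (1+\lambda c_i)^{-t\nu(A_i)}$, which is exactly $\exp(t\int\log(1-\lambda'(\cdots))d\nu)$-type quantity, can be rewritten as $\EE\exp(-\lambda\sum_i c_i G(A_i))$ after a reparametrization. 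This is the "connection between $\Kinf$ and the MGF of the Gamma process" alluded to in the proof sketch of Theorem~\ref{thm:main}.

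For the lower bound (RHS $\leq$ LHS), fix a partition $(A_1,\dots,A_k)$ and apply the variational formula to the finite-space $\Kinf$ inside the expectation: for each realization of $X$, $\exp(-t\Kinf((\nu(A_i))_i, u, (\bar f_i(X))_i)) = \min_{\lambda \in [0, 1/(\bar f_{\max}-u)]} \prod_i (1 - \lambda(\bar f_i(X)-u))^{t\nu(A_i)}$ where $\bar f_i(X) = \int_{A_i} f\,dX / X(A_i)$. Bounding the min by any fixed feasible $\lambda$ (chosen uniformly, using $\bar f_i \leq f_{\max}$) and taking expectations, I would expand $\prod_i(1-\lambda(\bar f_i(X)-u))^{t\nu(A_i)}$; the exponent $t\nu(A_i)$ matching the Gamma shape on $A_i$ lets me rewrite the expectation over $X \sim \DP(t\nu)$ as an expectation over the Gamma process, collapsing $\sum_i (\bar f_i(X)-u)G(A_i) = \int(f-u)\,dG$ by linearity and the partition structure — so the whole expression becomes $\EE\exp(-\lambda'\int(f-u)dG)$, independent of the partition, and equal to $\exp(t\int\log(1-\lambda(f-u))d\nu) \leq \exp(-t\Kinf(\nu,u,f))$ by Lemma~\ref{lem:Variational formula for Kinf}. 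Taking the sup over partitions preserves the inequality.

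For the upper bound (LHS $\leq$ RHS), I would instead show that the supremum over partitions recovers $\exp(-t\Kinf(\nu,u,f))$ by choosing partitions that make $f$ nearly constant on each cell. Pick the $\lambda^\star$ attaining the max in $\Kinf(\nu,u,f) = \max_\lambda \int\log(1-\lambda(f-u))d\nu$. For a partition into cells of small oscillation $\mathrm{osc}_{A_i}(f) < \delta$ (possible since $\Omega$ is compact and $f$ continuous), on each cell $\bar f_i(X)$ lies within $\delta$ of $f$, so $\Kinf((\nu(A_i))_i,u,(\bar f_i(X))_i) \geq \int\log(1-\lambda^\star(\bar f_i(X)-u))\,d(\text{discretized }\nu)$, which is within $O(\delta)$ of $\int\log(1-\lambda^\star(f-u))d\nu = \Kinf(\nu,u,f)$, uniformly in $X$ (here I use continuity of $t\mapsto\log(1-\lambda^\star t)$ on a compact range bounded away from its singularity, valid because $\lambda^\star \leq 1/(f_{\max}-u)$ and the worst cell containing the maximizer can be handled separately or absorbed since $\bar f_i \leq f_{\max}$ strictly unless the cell is a single maximizing point). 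Hence each term in the sup is $\geq \exp(-t\Kinf(\nu,u,f)) - O(\delta)$; letting $\delta \to 0$ gives the claim.

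The main obstacle I anticipate is the boundary case $u \to f_{\max}$, equivalently handling the cell containing $\arg\max f$: there $\lambda^\star$ may be at the endpoint $1/(f_{\max}-u)$ and $\log(1-\lambda^\star(\bar f_i(X)-u))$ can approach $-\infty$ if $\bar f_i(X)$ is close to $f_{\max}$. One fix is to note that $\bar f_i(X) \le f_{\max}$ always, so $1 - \lambda^\star(\bar f_i(X)-u) \ge 0$ and the product is still well-defined and nonnegative; the discretized and continuous integrals of $\log(1-\lambda^\star(f-u))$ against $\nu$ differ by a quantity controlled by the modulus of continuity of $f$ and the (possibly improper but integrable, by feasibility of $\lambda^\star$ in the variational problem) integrand. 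Making this uniform-in-$X$ continuity argument rigorous near the singularity — perhaps by a separate truncation of the integrand at level $-M$ and a limiting argument — is where the real care is needed; everything else is bookkeeping with the Gamma-process MGF.
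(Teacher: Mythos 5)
Your first inequality (every partition term is at most $\exp(-t\Kinf(\nu,u,f))$) follows the paper's own route: the variational formula of Lemma~\ref{lem:Variational formula for Kinf} for the finite $\Kinf$, shrinking the $\lambda$-range to $[0,1/(f_{\max}-u)]$ via $\int_{A_i}f\,dX/X(A_i)\le f_{\max}$, the Gamma-process MGF identity, the exchange of the optimization over $\lambda$ with the expectation over $X$, and the collapse of $\sum_i G(A_i)\pa{\int_{A_i}f\,dX/X(A_i)-u}$ into $\int(f-u)\,dG$. Apart from sign slips (the product should be $\prod_i(1-\lambda(\bar f_i-u))^{-t\nu(A_i)}$, and the fixed-$\lambda$ bound is $\exp\pa{-t\int\log(1-\lambda(f-u))d\nu}$, which only after minimizing over $\lambda$ becomes $\exp(-t\Kinf(\nu,u,f))$), this half is correct and matches the paper, including its level of informality about how the cell-wise Gamma masses recombine with the normalized restrictions of $X$ into a single $\cG(t\nu)$ process.

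The other direction contains a genuine directional error. To show $\exp(-t\Kinf(\nu,u,f))\le \sup_\Pi \mathbb{E}\big[\exp\pa{-t\Kinf\pa{(\nu(A_i))_i,u,(\bar f_i(X))_i}}\big]$ you need, for a fine partition, an \emph{upper} bound $\Kinf\pa{(\nu(A_i))_i,u,(\bar f_i(X))_i}\le \Kinf(\nu,u,f)+o(1)$ uniformly in $X$, so that the integrand is bounded \emph{below}. Plugging the continuous optimizer $\lambda^\star$ into the finite variational problem gives only $\Kinf\pa{(\nu(A_i))_i,u,(\bar f_i(X))_i}\ge \sum_i\nu(A_i)\log(1-\lambda^\star(\bar f_i(X)-u))\approx \Kinf(\nu,u,f)$, i.e.\ a \emph{lower} bound on the discretized divergence and hence an upper bound on each term of the supremum --- the opposite of what your conclusion ``each term is $\ge \exp(-t\Kinf(\nu,u,f))-O(\delta)$'' requires. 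The paper closes this direction without touching $\lambda$ at all: writing $\tilde f_X=\sum_i \bar f_i(X)\,\II{\cdot\in A_i}$, a small-oscillation partition gives $\tilde f_X\ge f-\varepsilon$ uniformly in $X$, so the feasible set of the discretized $M$-projection problem contains that of the continuous one at a threshold shifted by $\varepsilon$, whence $\Kinf\pa{(\nu(A_i))_i,u,(\bar f_i(X))_i}=\Kinf(\nu,u,\tilde f_X)\le \Kinf(\nu,u,f+\varepsilon)$ pointwise; one then lets $\varepsilon\to 0$ using one-sided continuity of $\Kinf(\nu,\cdot,f)$ at $u$. This monotonicity argument also dissolves the obstacle you flag at the end: the singularity of $\log(1-\lambda^\star(\cdot-u))$ near $f_{\max}$ is an artifact of comparing the two dual problems at a common $\lambda$, and it never arises once you compare feasible sets of the primal formulation instead.
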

\begin{proof}
Let $A_1\sqcup~\dots~\sqcup A_k=\Omega$ be a finite measurable partition of $\Omega$. 
For $p\in \cM_1(\Omega)$, let $\ell(p)\triangleq \Kinf\pa{\pa{\nu(A_i)}_{i\in [k]},u,\pa{{\int_{A_i} f }/{p(A_i)}dp}_{i\in [k]}}$. Then
    \begin{align*}
\ell(p)&={\max_{\lambda \in [0,{1}/{(\max_{i\in [k]}{{{\int_{A_i} f }/{p(A_i)}dp}}-u)}]} \sum_{i\in [k]}\nu(A_i)\log\pa{1-\lambda\pa{{\frac{\int_{A_i} fdp }{p(A_i)}}-u}}}
    \\&=
    \max_{\lambda \in [0,{1}/{(\max_{i\in [k]}{{{\int_{A_i} f }/{p(A_i)}dp}}-u)}]} -t^{-1}\log\EEs{G\sim \cG\pa{t\nu}}{e^{\lambda\sum_{i\in [k]}G(A_i)({{\int_{A_i} f }/{p(A_i)}dp}-u)}}
    \\&\geq \max_{\lambda \in [0,1/\pa{f_{\max}-u}]} -t^{-1}\log\EEs{G\sim \cG\pa{t\nu}}{e^{\lambda\sum_{i\in [k]}G(A_i)({{\int_{A_i} f }/{p(A_i)}dp}-u)}},
    \end{align*}
so, $\EEs{X\sim \DP\pa{t\nu}}{\exp\pa{-t \ell(X)}}$ is upper bounded by
    \begin{align*}
&\EEs{X\sim \DP\pa{t\nu}}{\min_{\lambda \in [0,1/\pa{f_{\max}-u}]} \EEccs{G\sim \cG\pa{t\nu}}{e^{\lambda\sum_{i\in [k]}G(A_i)({{\int_{A_i} f }/{X(A_i)}dX}-u)}}{X}}
    \\&\leq
    \min_{\lambda \in [0,1/\pa{f_{\max}-u}]}\EEs{X\sim \DP\pa{t\nu}}{ \EEccs{G\sim \cG\pa{t\nu}}{e^{\lambda\sum_{i\in [k]}G(A_i)({{\int_{A_i} f }/{X(A_i)}dX}-u)}}{X}}
\\&=
    \min_{\lambda \in [0,1/\pa{f_{\max}-u}]}{ \EEs{G\sim \cG\pa{t\nu}}{e^{\lambda\pa{\int (f-u)dG}}}} = \exp\pa{-t\Kinf\pa{\nu,u,f}}.
\end{align*}
This is true for any finite partition, so $\exp\pa{-t\Kinf\pa{\nu,u,f}}$ is lower bounded by
\begin{align*}\sup_{(A_1,\dots, A_k)\in \Pi} \EEs{X\sim \DP\pa{t\nu}}{\exp\pa{-t \Kinf\pa{\pa{\nu(A_i)}_{i\in [k]},u,\pa{\frac{\int_{A_i} f dX}{X(A_i)}}_{i\in [k]}}}}.\end{align*}
To have the equality, let $\varepsilon>0$. There exists $(A_1,\dots, A_k)\in \Pi$ such that for any $p\in \cM_1(\Omega)$, $\left\| \sum_{i \in [k]} \frac{\int_{A_i} f\, dp}{p(A_i)} \II{\cdot \in A_i} - f \right\|_{\infty} \leq \varepsilon
$. Thus, 
\begin{align*}\exp\pa{-t\Kinf\pa{\nu,u - \varepsilon,f}} &= \exp\pa{-t\Kinf\pa{\nu,u,f+\varepsilon}}\\&\leq \EEs{X\sim \DP\pa{t\nu}}{\exp\pa{-t \Kinf\pa{\pa{\nu(A_i)}_{i\in [k]},u,\pa{\frac{\int_{A_i} f dX}{X(A_i)}}_{i\in [k]}}}}.\end{align*}
We get our result using the left-continuity of $\Kinf\pa{\nu, \cdot,f}$ \cite{garivier2022klucbswitch}.
\end{proof}


\bibliographystyle{amsplain}
\bibliography{ref}

\providecommand{\bysame}{\leavevmode\hbox to3em{\hrulefill}\thinspace}
\providecommand{\MR}{\relax\ifhmode\unskip\space\fi MR }
\providecommand{\MRhref}[2]{%
  \href{http://www.ams.org/mathscinet-getitem?mr=#1}{#2}
}
\providecommand{\href}[2]{#2}
\begin{thebibliography}{10}

\bibitem{belomestny2023sharp}
Denis Belomestny, Pierre Menard, Alexey Naumov, Daniil Tiapkin, and Michal Valko, \emph{Sharp deviations bounds for dirichlet weighted sums with application to analysis of bayesian algorithms}, arXiv preprint arXiv:2304.03056 (2023).

\bibitem{bishop2006pattern}
Christopher~M Bishop and Nasser~M Nasrabadi, \emph{Pattern recognition and machine learning}, vol.~4, Springer, 2006.

\bibitem{burnetas1996optimal}
Apostolos~N Burnetas and Michael~N Katehakis, \emph{{Optimal adaptive policies for sequential allocation problems}}, Advances in Applied Mathematics \textbf{17} (1996), no.~2, 122--142.

\bibitem{casella2005christian}
George Casella, \emph{Christian robert}, Monte Carlo Statistical Methods (2005).

\bibitem{castillo2017polya}
Isma{\"e}l Castillo, \emph{P{\'o}lya tree posterior distributions on densities},  (2017).

\bibitem{csiszar2003information}
I.~Csiszar and F.~Matus, \emph{Information projections revisited}, IEEE Transactions on Information Theory \textbf{49} (2003), no.~6, 1474--1490.

\bibitem{csiszar2004information}
I.~Csiszár and P.C. Shields, \emph{Information theory and statistics: A tutorial}, Foundations and Trends® in Communications and Information Theory \textbf{1} (2004), no.~4, 417--528.

\bibitem{dumbgen1998new}
Lutz Dumbgen, \emph{New goodness-of-fit tests and their application to nonparametric confidence sets}, Annals of statistics (1998), 288--314.

\bibitem{dutka1981incomplete}
Jacques Dutka, \emph{The incomplete beta function—a historical profile}, Archive for history of exact sciences (1981), 11--29.

\bibitem{elder2016bayesian}
Sam Elder, \emph{Bayesian adaptive data analysis guarantees from subgaussianity}, arXiv preprint arXiv:1611.00065 (2016).

\bibitem{ferguson1973bayesian}
Thomas~S Ferguson, \emph{A bayesian analysis of some nonparametric problems}, The annals of statistics (1973), 209--230.

\bibitem{frankl1990some}
Peter Frankl and Hiroshi Maehara, \emph{Some geometric applications of the beta distribution}, Annals of the Institute of Statistical Mathematics \textbf{42} (1990), 463--474.

\bibitem{ganesh1999inverse}
Ayalvadi Ganesh and Neil O'Connell, \emph{An inverse of sanov's theorem}, Statistics \& Probability Letters \textbf{42} (1999), no.~2, 201--206.

\bibitem{garivier2022klucbswitch}
Aurélien Garivier, Hédi Hadiji, Pierre Menard, and Gilles Stoltz, \emph{Kl-ucb-switch: optimal regret bounds for stochastic bandits from both a distribution-dependent and a distribution-free viewpoints}, 2022.

\bibitem{henzi2023some}
Alexander Henzi and Lutz D{\"u}mbgen, \emph{Some new inequalities for beta distributions}, Statistics \& Probability Letters (2023), 109783.

\bibitem{honda2015nonasymptotic}
Junya Honda and Akimichi Takemura, \emph{Non-asymptotic analysis of a new bandit algorithm for semi-bounded rewards}, Journal of Machine Learning Research \textbf{16} (2015), no.~113, 3721--3756.

\bibitem{Lai1985asymptotically}
Tze~L Lai and Herbert Robbins, \emph{{Asymptotically efficient adaptive allocation rules}}, Advances in Applied Mathematics \textbf{6} (1985), no.~1, 4--22.

\bibitem{lukacs1955characterization}
Eugene Lukacs, \emph{A characterization of the gamma distribution}, The Annals of Mathematical Statistics \textbf{26} (1955), no.~2, 319--324.

\bibitem{lynch1987large}
James Lynch and Jayaram Sethuraman, \emph{Large deviations for processes with independent increments}, The annals of probability \textbf{15} (1987), no.~2, 610--627.

\bibitem{marchal2017sub}
Olivier Marchal and Julyan Arbel, \emph{On the sub-gaussianity of the beta and dirichlet distributions}, Electronic Communications in Probability \textbf{22} (2017), no.~none, 1 -- 14.

\bibitem{murphy2022probabilistic}
Kevin~P. Murphy, \emph{Probabilistic machine learning: An introduction}, MIT Press, 2022.

\bibitem{perrault2025perturbed}
Pierre Perrault, \emph{Perturbed kullback-leibler deviation bounds for dirichlet processes}, arXiv preprint arXiv:2503.17255 (2025).

\bibitem{perrault2024new}
Pierre Perrault, Denis Belomestny, Pierre M{\'e}nard, {\'E}ric Moulines, Alexey Naumov, Daniil Tiapkin, and Michal Valko, \emph{A new bound on the cumulant generating function of dirichlet processes}, arXiv preprint arXiv:2409.18621 (2024).

\bibitem{rubin1981bayesian}
Donald~B Rubin, \emph{The bayesian bootstrap}, The annals of statistics (1981), 130--134.

\bibitem{sanov1961probability}
Ivan~Nicolaevich Sanov, \emph{On the probability of large deviations of random variables}, Selected Translations in Mathematical Statistics and Probability \textbf{1} (1961), 213--244.

\bibitem{skorski2023bernstein}
Maciej Skorski, \emph{Bernstein-type bounds for beta distribution}, Modern Stochastics: Theory and Applications (2023), 1--18.

\bibitem{tiapkin2022dirichlet}
Daniil Tiapkin, Denis Belomestny, Eric Moulines, Alexey Naumov, Sergey Samsonov, Yunhao Tang, Michal Valko, and Pierre Menard, \emph{From {D}irichlet to rubin: Optimistic exploration in {RL} without bonuses}, Proceedings of the 39th International Conference on Machine Learning (Kamalika Chaudhuri, Stefanie Jegelka, Le~Song, Csaba Szepesvari, Gang Niu, and Sivan Sabato, eds.), Proceedings of Machine Learning Research, vol. 162, PMLR, 17--23 Jul 2022, pp.~21380--21431.

\bibitem{zhang2020non}
Anru~R Zhang and Yuchen Zhou, \emph{On the non-asymptotic and sharp lower tail bounds of random variables}, Stat \textbf{9} (2020), no.~1, e314.

\end{thebibliography}

\end{document}